\newtheorem{theorem}{Theorem}
\newtheorem{example}{Example}
\newenvironment{proof}{{\bf Proof. }\begin{list}{}{}\item[]}{{\bf Q.E.D.}\end{list}}
\begin{document}

\title{Leap Gradient Algorithm  \thanks{ Arizona State University - School of Mathematical and Statistical Sciences}
}


\author{Sergey Nikitin       
}




\maketitle

\begin{abstract}
The paper proposes a new algorithm for solving global univariate optimization problems. The algorithm does not require convexity of the target function. For a broad variety of target functions after performing (if necessary) several evolutionary leaps the algorithm naturally becomes the standard descent (or ascent) procedure near the global extremum. Moreover, it leads us to an efficient numerical method for calculating the global extrema of univariate real analytic functions. 

\end{abstract}

\section{Introduction}
\label{intro}

The problem of finding global extrema (maxima and minima) for a univariate real function is important for variety of real world applications. For example, it arises in electric engineering \cite{Hamacher},\cite{Johnson}, \cite{Lam} in computer science \cite{Calvin}, \cite{Calvin_Zilinkas}, \cite{Kalra} and in various other fields (see \cite{Sergeyev} for further references) . In many industrial applications the global optimization algorithm is expected to operate in real time while simultaneously, finding the global extremum of non-convex functions exhibiting large number of local sub-extrema. 

The problem of efficiently finding a function's global extremum has been historically challenging. One of the first solutions, Zero Derivative Method (ZDM), was proposed by  Pierre de Fermat (1601-1665). His main idea was to look for the global extremum among critical points: the points where the derivative of the target function is zero. Despite its theoretical significance, Fermat's proposed method (ZDM) is limited by the numerical difficulties imposed by finding critical points.

 One of the leading global optimization approaches adopted by many industrial applications is a brute-force search or exhaustive search for the global extremum (Brute-Force Search (BFS)). It is simple to implement but its performance linearly depends on the complexity of the target function and the size of the search area.

   A plethora of optimization methods have been developed for various types of target functions. Among them Piyavskii-Shubert Method (PSM) occupies a special place \cite{Piyavskii_67}, \cite{Piyavskii_72}, \cite{Shubert}. It is one of the few procedures that delivers the global extremum for a univariate function and at the same time it exhibits reasonable performance as long as the respective Lipschitz constant is of a modest value. On the other hand, the method is very sensitive to the size of the Lipschitz constant: its performance sharply diminishes for large Lipschitz constants. For this reason, accelerations and improvements of PSM were developed in the following papers \cite{Ellaia}, \cite{Kvasov}, \cite{Sergeyev}, \cite{YSergeyev} . Numerical experiments presented in this paper show that Leap Gradient Algorithm (referred as LGA) significantly outperforms  PSM together with its modifications and improvements (from \cite{Ellaia},  \cite{Sergeyev}, \cite{YSergeyev}) when finding global extrema of polynomials.          
 
   The method of gradient descent (see, e.g. \cite{Guler}, \cite{KantorovichAkilov}, \cite{Nesterov}) is widely used to solve various practical optimization problems. The main advantage of the gradient descent algorithm is its simplicity and applicability to a wide range of practical problems. On the other hand, gradient descent has limitations imposed by the initial guess of a starting point and then its subsequent conversion to a suboptimal solution. This paper gives practical recipes on how to overcome those limitations for a univariate function and how to equip the gradient descent algorithm with abilities to converge to a global extremum. It is achieved via evolutionary leaps towards the global extremum. LGA neither requires the knowledge of the Lipschitz constant nor convexity conditions that are often imposed on the target function. Moreover, LGA naturally becomes the standard gradient descent procedure when the target function is convex or when the algorithm operates in the close proximity to the global extremum.

The recursive application of LGA yields an efficient algorithm for calculating global extrema for univariate polynomials. LGA does not intend to locate any critical points (like ZDM) instead it follows gradient descent (ascent) until a local extremum is reached and then performs an evolutionary leap towards the next extremum. As far as performance is concerned, numerical experiments conducted for univariate polynomials show that LGA outperforms BFS, ZDM and PSM with all its modifications from \cite{Ellaia},  \cite{Sergeyev}, \cite{YSergeyev}.

The layout of this publication is as follows.

Section \ref{intro} is the introduction.

Section \ref{sec:2} introduces LGA for univariate functions.

Section \ref{LGArecursion} explores the recursive application of LGA.

Section \ref{MultiW} describes a recursive implementation of LGA for polynomials.  

Section \ref{RealAnalytic} presets an implementation of LGA for real analytic univariate functions.


Section \ref{numerical_experiments} reports on numerical experiments with LGA. It compares LGA with ZDM, BFS, PSM and accelerations of PSM presented in \cite{Ellaia},  \cite{Sergeyev}, \cite{YSergeyev}. Polynomial roots for ZDM are calculated with the help of Laguerre's Method \cite{Adams}, \cite{Armijo}.


Section \ref{Acknowledgment} expresses gratitude to professionals who spent their time contributing to the paper, reviewing various parts of the publication and making many valuable suggestions and improvements.

Section \ref{appendix} finalizes the publication with a snapshot of the working and thoroughly tested source code for LGA.

\section{LGA: leaps towards the global extremum}
\label{sec:2}
Let $f(x)$ be known and well defined real function on $[a,\;b],$ a closed interval of real numbers. Consider the optimization problem
$$
f(x) \quad \to \quad  \min_{x\in [a,\;b]} 
$$
LGA can solve it with a given precision $h>0$ if its solution exists.

\vspace{0.1cm}

\paragraph{ \bf Leap Gradient Algorithm (LGA).}

\begin{itemize}
\item[\bf STEP 0.] Set
$$
x_0 =a.
$$
\item[\bf STEP 1.] Iterate
$$
x_{k+1} = x_k + h,
$$ 
as long as 
$$
f(x_k + h)\le f(x_k) 
$$
and
$$
x_k < b.
$$
If $x_k \ge b$ then {\bf STOP} and take $(b, f(b))$ as an estimate of the argument and the value for the global minimum.

If 
\begin{equation}
\label{gradient_decent}
f(x_k + h) > f(x_k) 
\end{equation}
 then proceed with {\bf STEP 2}.
\item[\bf STEP 2.] 
Let $x_k^{\star}$ be the solution of the following optimization problem
\begin{equation}
\label{optimization_proc}
\frac{f(x) - f(x_k)}{x - x_k}  \quad \to \quad \min_{x \in [x_k,\;b]}
\end{equation}
If 
$$
x_{k}^{\star} \ge x_k \;\;\;\mbox{and}\;\;\;f(x_k^{\star}) \ge f(x_k)
$$
then {\bf STOP} and $(x_k, f(x_k))$ is an estimate of the argument and the value for the global minimum. 

If 
$$
x_{k}^{\star} > x_k \;\;\;\mbox{and}\;\;\;f(x_k^{\star}) < f(x_k)
$$
 then $x_{k+1} = x_k^{\star}$ (LGA performs {\it  an evolutionary leap}) and go to {\bf STEP 1}.
\end{itemize}

LGA is illustrated in Fig. ~\ref{fig:1}.
\input epsf
\setlength{\unitlength}{1cm}
\begin{figure}
\epsfxsize=11cm
\epsfysize=6cm
\epsfbox{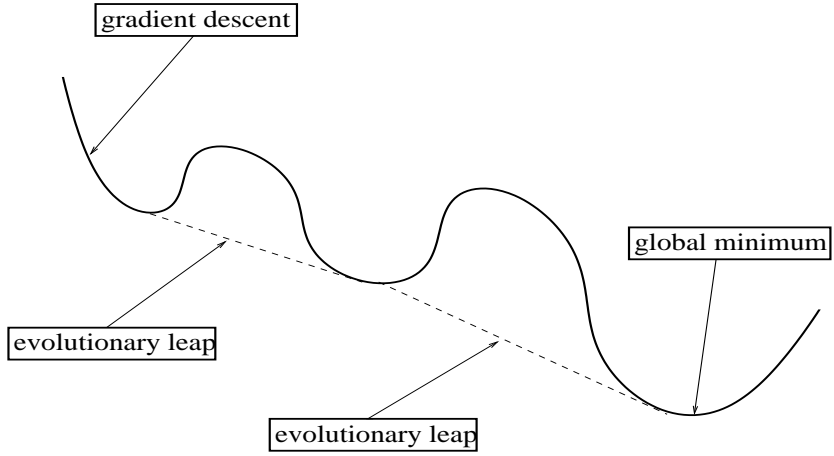}
\caption{Leap Gradient Algorithm (LGA) for a univariate function}
\label{fig:1}       
\end{figure}
If the target function is twice continuously differentiable on $(a,\;b)$ then the number of inflection points from $(a,\;b)$ is related to the number of evolutionary leaps performed by LGA (see {\bf STEP 2}). 
\begin{theorem}
\label{inflection_and_leaps}
Let $f(x)$ be twice continuously differentiable on $(a,\;b)$. Let $x^{\star}_k$ be the value of the evolutionary leap ({\bf STEP 2} of LGA). 

If 
$$
a<x_k<x^{\star}_k<b
$$
then $(x_{k-1},\;x^{\star}_k]$ contains at least two points where $(\frac{d}{dx})^2f(x)$ is equal to zero. 
\end{theorem}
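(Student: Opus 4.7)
My plan is to pull two zeros of $f''$ out of (i) first- and second-order optimality of $x_k^*$ for the secant-slope functional $\phi(x)=(f(x)-f(x_k))/(x-x_k)$ and (ii) the ``discrete local minimum'' recorded by STEP 1, namely $f(x_{k-1})\ge f(x_k)$ and $f(x_k+h)>f(x_k)$. First I will convert the optimality conditions into pointwise information on $f$: $\phi'(x_k^*)=0$ gives $f'(x_k^*)=\phi(x_k^*)=(f(x_k^*)-f(x_k))/(x_k^*-x_k)$, which is strictly negative because the leap requires $f(x_k^*)<f(x_k)$; differentiating $\phi$ once more and using $\phi'(x_k^*)=0$ yields $\phi''(x_k^*)=f''(x_k^*)/(x_k^*-x_k)$, so $\phi''(x_k^*)\ge 0$ forces $f''(x_k^*)\ge 0$.

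In the typical regime $x_k+h\le x_k^*$ I will then locate two critical points of $f$ inside $(x_{k-1},x_k^*)$. The mean value theorem applied to the STEP 1 data produces $\zeta_0\in(x_{k-1},x_k)$ with $f'(\zeta_0)\le 0$ and $\zeta_1\in(x_k,x_k+h)$ with $f'(\zeta_1)>0$, so the intermediate value theorem gives a first zero $\alpha\in(\zeta_0,\zeta_1)$ of $f'$. Because $f(x_k+h)>f(x_k)>f(x_k^*)$, a second application of the mean value theorem furnishes $\zeta_2\in(x_k+h,x_k^*)$ with $f'(\zeta_2)<0$, and the intermediate value theorem then produces a second zero $\beta\in(\zeta_1,\zeta_2)\subset(x_k,x_k^*)$ of $f'$ with $\alpha<\beta$. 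Rolle's theorem on $f'$ over $[\alpha,\beta]$ supplies $\mu_1\in(\alpha,\beta)$ with $f''(\mu_1)=0$; the mean value theorem on $[\beta,x_k^*]$ gives $f''(\nu)<0$ for some $\nu\in(\beta,x_k^*)$, and the intermediate value theorem combined with $f''(x_k^*)\ge 0$ produces $\mu_2\in[\nu,x_k^*]$ with $f''(\mu_2)=0$, the ordering $\mu_1<\beta<\nu\le\mu_2$ guaranteeing $\mu_1\ne\mu_2$.

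The hard part is the ``small leap'' regime $x_k^*\in(x_k,x_k+h)$, where $\zeta_1$ may fall to the right of $x_k^*$ so the second critical point $\beta$ above cannot be secured. My backup is to work with the auxiliary function $g(x)=f(x)-f(x_k)-f'(x_k^*)(x-x_k)$: optimality of $x_k^*$ as the minimizer of $\phi$ forces $g\ge 0$ on $[x_k,b]$ together with $g(x_k)=g(x_k^*)=g'(x_k^*)=0$, $g'(x_k)\ge 0$, and $g''=f''$. Rolle on $g$ produces an intermediate $\eta\in(x_k,x_k^*)$ with $g'(\eta)=0$, and Rolle on $g'$ over $[\eta,x_k^*]$ immediately yields one zero of $f''$. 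Producing the second zero is the real obstacle: it requires a case split on the sign of $g(x_{k-1})=f(x_{k-1})-f(x_k)+hf'(x_k^*)$, with the non-negative cases handled by an extra Rolle step that feeds a third zero of $g'$ into the Rolle chain, and the $g(x_{k-1})<0$ case handled by showing either that $g$ attains an interior minimum on $(x_{k-1},x_k)$ (yielding another zero of $g'$) or else that $g$ vanishes on a subinterval so that $f''\equiv 0$ there.
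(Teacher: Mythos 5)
In the regime $x_k^{\star}\ge x_k+h$ your argument is correct and is essentially the paper's: both proofs rest on the same three facts (the first-order condition $f'(x_k^{\star})=\phi(x_k^{\star})<0$, the second-order condition $f''(x_k^{\star})\ge 0$, and the mean-value data $f'\le 0$ somewhere in $(x_{k-1},x_k)$, $f'>0$ somewhere in $(x_k,x_k+h)$). The paper converts these into a sign pattern $+,-,\ge 0$ for $f''$ and applies the intermediate value theorem twice, while you extract two zeros of $f'$ and apply Rolle; the difference is cosmetic.

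The real content of your proposal is the small-leap regime $x_k^{\star}\in(x_k,x_k+h)$, which you correctly isolate as the obstruction — and which cannot be closed, because the statement is false there. Your repair for the sub-case $g(x_{k-1})<0$ does not work: with $g(x_{k-1})<0=g(x_k)$ the function $g$ can be strictly increasing on all of $[x_{k-1},x_k]$, so it has neither an interior minimum there nor an interval on which it vanishes. Concretely, take $a=x_0=-1$, $h=1$, $b=2$, $f(0)=0$, and $f'(x)=-\tfrac{5}{8}-\tfrac{5}{4}x$ on $[-1,\tfrac12]$, $f'(x)=x-\tfrac74$ on $[\tfrac12,\tfrac34]$, $f'(x)=-1+64(x-\tfrac34)$ on $[\tfrac34,2]$, smoothed to $C^2$ near $\tfrac12$ and $\tfrac34$. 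Then $f(-1)=0=f(0)$ and $f(1)=1>0$, so STEP 1 stops at $x_1=0$; the secant slope $(f(x)-f(0))/x$ equals $-1+\tfrac{32}{x}(x-\tfrac34)^2$ for $x\ge\tfrac34$ and exceeds $-1$ for $x<\tfrac34$, so its unique minimizer on $(0,2]$ is $x_1^{\star}=\tfrac34$ with $f(\tfrac34)=-\tfrac34<0$, and LGA leaps; yet $f''<0$ on $(-1,\tfrac12)$ and $f''>0$ on $(\tfrac12,\tfrac34]$, so $(x_0,x_1^{\star}]$ contains only one zero of $f''$ (and here $g'=f'+1>0$ on $[-1,0]$, defeating your case split). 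Note that the paper's own proof has the same unacknowledged gap: its point $\tilde x\in(x_k,x_k^{\star})$ with $f''(\tilde x)<0$ is deduced from $f'(\xi)>0$ at some $\xi\in(x_k,x_k+h)$ together with $f'(x_k^{\star})<0$, which requires $\xi<x_k^{\star}$. Both proofs, including yours, become complete once one adds the hypothesis $x_k^{\star}\ge x_k+h$, which the paper's implementation silently enforces via the line \texttt{if(rt - rt\_prev <= step) return rt\_prev;} — so the right move is to state that hypothesis explicitly rather than to fight the small-leap case.
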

\begin{proof}
Evolutionary leaps from $x_k$ (on $(a,\;b))$ are solutions of the following equation.
\begin{equation}
\label{leap_formula}
f(x) - f(x_k) = \frac{d}{dx}f(x) \cdot  (x - x_k) 
\end{equation}
where $x \in \; [x_k,\;b).$ 

Indeed, if $f(x)$ is continuously differentiable then the necessary condition for $x$ to be the solution for (\ref{optimization_proc}) on $(x_k,\;b)$ is
$$
\frac{d}{dx}\big( \frac{f(x) - f(x_k)}{x-x_k}\big) =0.
$$

Differentiating yields
$$
\frac{\frac{d}{dx}f(x)}{x-x_k} -\frac{f(x)-f(x_k)}{(x-x_k)^2}=0.
$$
After multiplying the equation with $(x-x_k)^2$ we obtain (\ref{leap_formula}).

Since $x^{\star}_k$ is the solution of the problem (\ref{optimization_proc}) and $f(x)$ is twice differentiable we conclude that
$$
\big(\frac{d}{dx}\big)^2 (\frac{f(x) - f(x_k)}{x - x_k})\left. \right|_{x=x^{\star}_k} \ge 0 
$$ 
Differentiating yields
$$
\frac{\big(\frac{d}{dx}\big)^2 f(x)}{(x-x_k)} -\frac{2}{(x-x_k)^3}\cdot (\big(\frac{d}{dx} f(x)\big) \cdot (x-x_k) - (f(x)-f(x_k))  ) \ge 0
$$
when $x=x^{\star}_k.$ Hence, making use of (\ref{leap_formula}) we obtain 
\begin{equation}
\label{rightplus}
\left.\big(\frac{d}{dx}\big)^2 f(x)\right|_{x=x^{\star}_k} \ge 0.
\end{equation}

According to {\bf STEP 2} of LGA for the evolutionary leap $x^{\star}_k$ we have
$$
f(x^{\star}_k)<f(x_k).
$$
That together with (\ref{leap_formula}) implies 
\begin{equation}
\label{rt}
\left.\big(\frac{d}{dx}\big) f(x)\right|_{x=x^{\star}_k} <0.
\end{equation}
On the other hand, (\ref{gradient_decent}) yields the existence of $\xi$ such that 
$$
\left.\big(\frac{d}{dx}\big) f(x)\right|_{x=\xi} >0 \;\;\mbox{ and } \xi \in (x_k,\;x_k+h)
$$ 
That together with (\ref{rt}) yield the existence of $\tilde { x} \in ( x_{k}, x^{\star}_k)$ where
\begin{equation}
\label{midminus}
\left.\big(\frac{d}{dx}\big)^2 f(x)\right|_{x=\tilde{ x}} < 0.
\end{equation}
Taking into account that $x_k>a$ is obtained after {\bf STEP 1} of LGA and
$$
f(x_k)- f(x_{k-1}) \le 0, \;\;\; \;\;\;
f(x_k+h) -  f(x_k)  > 0 
$$
we obtain the existence of $\bar{\bar{x}} \in (x_{k-1},\:x_k+h)$ where
\begin{equation}
\label{leftplus}
\left.\big(\frac{d}{dx}\big)^2 f(x)\right|_{x=\bar{\bar{ x}}} > 0.
\end{equation}
Under the conditions of the theorem 
$$
\big(\frac{d}{dx}\big)^2 f(x)
$$
is continuous on $(a,\;b)$ and the statement follows from (\ref{rightplus}), (\ref{midminus}) and (\ref{leftplus}). 

\end{proof}

\vspace{0.1cm}

That means (in a generic situation) a non trivial evolutionary leap inside $(x_k,\;b)$ is only possible over two inflection points of a target function. 

\begin{example}
Consider the optimization problem
\begin{equation}
\label{4thdegree}
f(x)=x^4+a \cdot x^3 + b\cdot x^2 +c\cdot x+d \quad \to \quad \min_{x \in \mathbb{R}} 
\end{equation}
where $a,\;b,\;c,\;$ and $d$ are real numbers. 
If
$$
3a^2 > 8b
$$
then  
$$
\frac{d^2}{dx^2} f(x) = 4\cdot 3 \cdot x^2 + 3\cdot 2 \cdot a\cdot x + 2\cdot b
$$
has two different zeroes. Hence, by Theorem \ref{inflection_and_leaps}, LGA might need to perform a single evolutionary leap (over two inflection points) in order to  solve the optimization problem.

Otherwise, $3a^2 \le 8b$, LGA coincides with the standard gradient decent.
\end{example}

\section{Recursive leap gradient procedure}
\label{LGArecursion}
LGA replaces the optimization problem
$$
f(x) \;\;\to\;\;\min_{x \in [a,\;b]}
$$
with
$$
\frac{f(x) - f(x_k)}{x - x_k} \;\;\to \;\; \min_{x \in [x_k.\;b]}
$$
where $x_k$ is calculated at the previous step of LGA. It leads us to the following recursive procedure executed at each iteration of LGA.
\begin{equation}
\label{iter_1}
g_0(x) = f(x)
\end{equation}
and
\begin{equation}
\label{iter_2}
g_{m}(x) = \frac{g_{m-1}(x) - g_{m-1}(x_k)}{x - x_k} .
\end{equation}
In order to complete an iteration of LGA for $g_{m-1}(x)$ one needs to calculate
$$
g_{m} (x) \;\;\to\;\; \min_{x \in [x_k.\;b]}
$$
and LGA finds the minimum of $g_{m}(x).$ If finding the minimum for $g_m(x)$ is obvious, then the iteration of LGA is completed after $m$ recursive steps.  

The next theorem shows when an LGA iteration is completed after a finite number of recursive steps.

\begin{theorem}
If a real function $f(x)$ is at least $n$-times continuously differentiable on the segment $[a,\;b]$ and
$$
\frac{d^n}{dx^n}f(x) \ge 0 \;\;\;\forall \;\;x \in \;[a,\;b]
$$
then each iteration of LGA for
$$
f(x) \;\;\to\;\;\min_{x \in [a,\;b]}
$$ 
is completed after not more than $n-1$  recursive steps.
\end{theorem}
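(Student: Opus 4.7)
The plan is to obtain a closed-form expression for the iterates $g_m$ produced by (\ref{iter_1})--(\ref{iter_2}) that makes monotonicity transparent once the sign condition on $f^{(n)}$ is invoked. Specifically, I would show that $g_m(x)$ is essentially a scaled Taylor remainder of $f$ at $x_k$, and that the hypothesis $f^{(n)}\ge 0$ forces $g_{n-1}$ to be non-decreasing on $[x_k,b]$, so its minimum on that interval is trivially attained at the left endpoint $x=x_k$ and the LGA recursion terminates.

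First, I would prove by induction on $m$, for $1\le m\le n-1$, that
$$
g_m(x)=\frac{f(x)-T_{m-1}(x)}{(x-x_k)^m},\qquad T_{m-1}(x)=\sum_{i=0}^{m-1}\frac{f^{(i)}(x_k)}{i!}(x-x_k)^i.
$$
The base case $m=1$ is the definition of $g_1$ with $T_0(x)=f(x_k)$. For the inductive step, the value $g_{m-1}(x_k)$ needed in (\ref{iter_2}) is obtained as the limit $\lim_{x\to x_k}g_{m-1}(x)=\frac{f^{(m-1)}(x_k)}{(m-1)!}$, which exists by Taylor's theorem since $m-1\le n-2$; substituting this into (\ref{iter_2}) and combining fractions yields the stated form for $g_m$.

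Second, I would use the integral form of Taylor's remainder to write
$$
f(x)-T_{n-2}(x)=(x-x_k)^{n-1}\int_0^1\frac{(1-s)^{n-2}}{(n-2)!}\, f^{(n-1)}(x_k+s(x-x_k))\, ds,
$$
so that after cancellation
$$
g_{n-1}(x)=\int_0^1\frac{(1-s)^{n-2}}{(n-2)!}\, f^{(n-1)}(x_k+s(x-x_k))\, ds.
$$
Differentiating under the integral in $x$ and invoking $f^{(n)}\ge 0$ yields
$$
\frac{d}{dx}g_{n-1}(x)=\int_0^1\frac{(1-s)^{n-2}}{(n-2)!}\, s\, f^{(n)}(x_k+s(x-x_k))\, ds \ge 0.
$$
Hence $g_{n-1}$ is non-decreasing on $[x_k,b]$, its minimum on this interval is obviously its left-endpoint value, and the LGA iteration for $g_{n-2}$ stops at recursion level $n-1$, which proves the bound.

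The main technical hurdle is handling the removable singularity at $x=x_k$ cleanly when setting up the induction: the recursion (\ref{iter_2}) requires the value $g_{m-1}(x_k)$, but the explicit formula for $g_{m-1}$ is a $0/0$ ratio at that point. Once one identifies the natural value $\frac{f^{(m-1)}(x_k)}{(m-1)!}$ through Taylor's theorem, the remaining algebra and the differentiation under the integral are routine. This is also the place where the full $n$-fold differentiability assumption is needed, going beyond the mere non-negativity of $f^{(n)}$.
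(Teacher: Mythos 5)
Your proof is correct and follows essentially the same route as the paper: both identify $g_{n-1}$ explicitly as a (normalized) Taylor remainder of $f$ at $x_k$ with integral kernel and then invoke $f^{(n)}\ge 0$ to place the minimum at the left endpoint $x_k$. Your final formula is just the paper's expression for $g_{n-1}$ after one integration by parts (you keep the $f^{(n-1)}$ kernel and differentiate under the integral, whereas the paper writes $g_{n-1}(x)$ as a constant plus $(x-x_k)$ times a non-negative integral of $f^{(n)}$), and your explicit treatment of the removable singularity at $x=x_k$ is a welcome clarification of a step the paper leaves implicit.
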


\begin{proof}

\vspace{0.1cm}

The function can be represented by Taylor expansion centered at $x_k,$
$$
f(x) = f(x_k) + \sum_{j=1}^{n-1} \frac{1}{j!} \cdot \frac{d^j}{dx^j}f(x_k) \cdot (x-x_k)^j + \int_0^1 \frac{d^n}{dx^n}f(x_k+t\cdot (x-x_k))\cdot \frac{(x-x_k)^n(1-t)^{n-1}}{(n-1)!}dt
$$
for all $x \in \;[x_k,\;b].$
In accordance with notations (\ref{iter_1}) and (\ref{iter_2}) we have
\begin{eqnarray*}
g_m(x) &=& \frac{1}{m!} \cdot \frac{d^m}{dx^m}f(x_k) + \sum_{j=m+1}^{n-1} \frac{1}{j!} \cdot \frac{d^j}{dx^j}f(x_k) \cdot (x-x_k)^{j-m}+\\
&&  \int_0^1 \frac{d^n}{dx^n}f(x_k+t\cdot (x-x_k))\cdot \frac{(x-x_k)^{n-m}(1-t)^{n-1}}{(n-1)!}dt 
\end{eqnarray*}
and
$$
g_{n-1}(x) = \frac{1}{(n-1)!}\frac{d^{n-1}}{dx^{n-1}}f(x_k) + \int_0^1 \frac{d^n}{dx^n}f(x_k+t\cdot (x-x_k))\cdot \frac{(1-t)^{n-1}}{(n-1)!}dt \cdot (x - x_k).
$$
Under the conditions of the theorem $g_{n-1}(x)$ achieves its minimum on $[x_k,\;b]$ at $x_k.$

\end{proof}

Recursive LGA is an efficient numerical method for finding global extrema of univariate polynomials.

\begin{theorem}
For any polynomial
$$
p(x) = p_0+p_1 \cdot x + p_2 \cdot x^2 +\dots + p_n \cdot x^n
$$
on a segment $[a,\;b]$ recursive LGA delivers the global extremum for $p(x)$ in a finite number of steps. 
\end{theorem}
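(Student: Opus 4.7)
The plan is to exploit the degree-reduction property of the recursive construction. I first observe that if $g_{m-1}$ is a polynomial of degree $d$, then
\[
g_m(x) \;=\; \frac{g_{m-1}(x) - g_{m-1}(x_k)}{x - x_k}
\]
is a polynomial of degree $d-1$: the numerator vanishes at $x_k$, so synthetic division by $(x - x_k)$ is exact and produces a polynomial quotient of one lower degree. Starting from $g_0 = p$ of degree $n$, a short induction on $m$ shows that every $g_m$ is a polynomial of degree $n-m$. In particular, $g_{n-1}$ is affine, and its minimum over $[x_k,b]$ is attained at an endpoint and can be read off without any further recursive call.

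This immediately bounds the recursion depth of any single LGA iteration by $n-1$. The same bound also follows from the preceding theorem, since $p^{(n)}(x) = n!\,p_n$ is a constant of definite sign on $[a,b]$; if $p_n < 0$ one applies that theorem to $-p$ and uses the ascent variant of LGA, because minimizing $p$ is equivalent to maximizing $-p$.

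To bound the number of outer LGA iterations, I note that STEP~1 is a grid march of step $h > 0$ on $[a,b]$ and so terminates in at most $\lceil (b-a)/h \rceil$ updates before either halting or invoking STEP~2. Each evolutionary leap in STEP~2 sends $x_k$ to a point $x_k^{\star}$ with $p(x_k^{\star}) < p(x_k)$, and the subsequent STEP~1 descent drives the iterate to a grid point whose value is no larger than $p(x_k^{\star})$ and hence strictly smaller than $p(x_k)$. Because a polynomial of degree $n$ has at most $n-1$ critical points, any uniform-grid scan through $[a,b]$ encounters only finitely many discrete local minima, and the strictly decreasing sequence of pre-leap values forces the number of leaps to be finite.

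The main obstacle I anticipate is the inductive bookkeeping: each sub-problem $g_m \to \min$ is itself attacked by LGA, so both termination and correctness at recursion level $m$ depend on termination and correctness at level $m+1$. I would phrase the entire argument as a strong induction on the polynomial degree, with the affine case as the trivial base and the degree-reduction identity $\deg g_m = \deg g_{m-1} - 1$ supplying the inductive step. Apart from that bookkeeping, I expect no serious difficulty.
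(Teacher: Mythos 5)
Your proposal is correct and follows essentially the same route as the paper: strong induction on the degree, driven by the fact that $g_m(x)=\bigl(g_{m-1}(x)-g_{m-1}(x_k)\bigr)/(x-x_k)$ is again a polynomial of degree one lower, with the affine/quadratic case as the base. You are in fact somewhat more explicit than the paper about termination of the outer loop (the $\lceil (b-a)/h\rceil$ grid bound and the finiteness of evolutionary leaps), which the paper leaves implicit in the phrase that the inductive step ``follows directly from recursive LGA.''
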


\begin{proof}

\vspace{0.1cm}

The proof is conducted by mathematical induction with respect to the degree of a polynomial. As a basis of mathematical induction and for the purpose of illustrations let us consider finding the global minimum on $[a,\;b]$ for the quadratic polynomial
$$
p(x)= p_0 + p_1  \cdot x  + p_2 \cdot x^2 \;\;\;(p_2 \not= 0)
$$
In accordance with notations (\ref{iter_1}), (\ref{iter_2}) 
\begin{eqnarray*}
g_0(x) &=& p_0 + p_1  \cdot x  + p_2 \cdot x^2\\
g_1(x) &=&  p_1 + 2p_2 \cdot a + p_2 \cdot (x-a).
\end{eqnarray*}
If $p_2 >0$ then $g_1(x)$ has its minimum at $x_0=a.$  If 
$$
p_1 + 2p_2 \cdot a \ge 0
$$
then according to LGA the global minimum is reached ad $x=a$ and the procedure stops. Otherwise,
$$
p_1 + 2p_2 \cdot a < 0
$$
LGA leads us to $x_1 = a + h,$ where the step size $h$ is dictated by the required precision of LGA. We follow the standard gradient descent when calculating $x_1,\;x_2,\;x_3,\;\dots x_k$ as long as 
$$
p_1 + 2p_2 \cdot x_k < 0.
$$
The gradient descent either stops at $b$ and the global minimum is located at $b$ or, according to LGA, it stops at the first $x_k$ such that
$$
p_1 + 2p_2 \cdot x_k \ge 0
$$
and $x_k$ delivers the global minimum.

If $p_2 <0$ then the minimum for $g_1(x)$ is located at $b.$ According to LGA, if 
$$
p_1 + 2p_2 \cdot a + p_2 \cdot (b-a) \ge 0
$$
then the minimum is at $a.$ Otherwise, the minimum is at $b.$ The basis of the mathematical induction is established.

The step of mathematical induction follows directly from recursive LGA. Indeed, suppose that recursive LGA delivers, in a finite number of steps, the global minimum for any polynomial of degree less than $n.$ However, following notations (\ref{iter_1}) and (\ref{iter_2}), in order to calculate the global minimum for a polynomial $g_0(x)$ of $n$-th degree we need to calculate the global minimum for $g_1(x),$ a polynomial of $(n-1)$-st degree. Hence, the statement follows by mathematical induction.

\end{proof}

\section{LGA: numerical recursive procedure for polynomial extrema.}
\label{MultiW}

Horner's algorithm  (see, e.g., \cite{Higham}, \cite{Knuth}) plays the central role in implementation of recursive LGA for polynomials. 
LGA reduces the optimization problem
$$
f(x) \;\;\to\;\;\min_{x \in [a,\;b]}
$$
to
$$
\frac{f(x) - f(x_k)}{x - x_k} \;\;\to \;\; \min_{x \in [x_k,\;b]}
$$
where $x_k$ is calculated at the previous step of LGA. If 
$$
f(x) = p_0+p_1 \cdot x + p_2 \cdot x^2 + \dots + p_n \cdot x^n
$$
is a polynomial with real coefficients then so is
$$
\frac{f(x) - f(x_k)}{x - x_k} = q_0+q_1 \cdot x + q_2 \cdot x^2 + \dots + q_{n-1} \cdot x^{n-1}
$$
where coefficients $q_0,\;q_1, \dots q_{n-1}$ are calculated  as follows. 

\vspace{0.1cm}

\paragraph{\bf Horner's Algorithm }
\begin{itemize}
\item[$\bullet$]$q_{n-1} = p_n$
\item[$\bullet$] $q_{j-1} = x_k \cdot q_j + p_j,\;\; \mbox{ where } \;\;j=n-1,\;n-2,\;\dots, \; 1 $
\end{itemize}

Recursive LGA (\ref{iter_1}),\; (\ref{iter_2})  described in section \ref{LGArecursion} is reduced to a finite number of iterations for Horner's Algorithm until the resulted polynomial is either a linear or a quadratic function. Then the solution of the optimization problem is trivial and therefore the recursive procedure delivers the global extremum.

Performing an evolutionary leap is a numerically expensive operation. Theorem \ref{inflection_and_leaps} shows that each LGA leap inside the interval is a jump over two zeroes of the second derivative of the target function. That allows to improve the performance of LGA for polynomials by limiting the number of evolutionary jumps by at most $n-2,$ where $n$ is the degree of the target polynomial. The respective modification of LGA for
$$
p_0+p_1 \cdot x + p_2 \cdot x^2 + \dots + p_n \cdot x^n \;\;\to \;\; \min_{x \in [a,\;b]}
$$
 is as follows. 

\paragraph{\bf LGA for polynomials}

\begin{itemize}
\item[\bf STEP 0.] If the degree of the polynomial is $1,$ then return
$$
(a,\;p_0+p_1 \cdot a) \;\;\mbox{ if }\;\;\;p_0+p_1 \cdot a \le \;p_0+p_1 \cdot b
$$
Otherwise, return 
$$
(b,\;p_0+p_1 \cdot b)
$$
as the argument, value pair for the global minimum.

If the degree is equal to $2,$
$$
P(x)=p_0+p_1 \cdot x + p_2 \cdot x^2.
$$
If $p_2 > 0$ then return $(b,\;P(b))$ for 
$$
-\frac{p_1}{2\cdot p_2}\ge b,
$$
and  return $(a,\;P(a))$ when
$$
-\frac{p_1}{2\cdot p_2}\le b.
$$
Otherwise, return 
$$
(c,\;P(c)), 
$$
where $c = -\frac{p_1}{2\cdot p_2}.$

 If the degree of the polynomial is larger than $2,$ then set 
$$
\mbox{\bf Number\_of\_jumps} = 0,
$$
$$
x_0 =a.
$$
\item[\bf STEP 1.] Iterate
$$
x_{k+1} = x_k + h,
$$ 
as long as 
$$
q_{k0}+q_{k1} \cdot (x_k+h) + q_{k2} \cdot (x_k+h)^2 + \dots + q_{k n-1} \cdot (x_{k}+h)^{(n-1)} \le 0 
$$
and
$$
x_k < b,
$$
where $(q_{k0},\;q_{k1},\;\dots\; q_{k n-1})$ are calculated with Horner's algorithm.
\begin{itemize}
\item[$\bullet$]$ q_{k n-1}=p_n$
\item[$\bullet$]$ q_{k i-1}= x_k\cdot q_{k i} + p_i\;\;\mbox{ for }\;\;i=1,\;2,\;\dots n-1$
\end{itemize}

If $x_k \ge b$ then {\bf STOP} and return $b$ as the argument and 
$$
p_0+p_1 \cdot b + p_2 \cdot b^2 + \dots + p_n \cdot b^n
$$
as the value for the estimate of the global minimum.

If 
$$
q_{k0}+q_{k1} \cdot (x_k+h) + q_{k2} \cdot (x_k+h)^2 + \dots + q_{k n-1} \cdot (x_{k}+h)^{(n-1)} > 0 
$$
 then proceed with {\bf STEP 2}.
\item[\bf STEP 2.] 
If LGA already performed $n-2$ evolutionary jumps, $\mbox{\bf Number\_of\_jumps} \ge n-2,$ then {\bf STOP} and return $x_k$ and 
$$
p_0+p_1 \cdot x_k + p_2 \cdot x_k^2 + \dots + p_n \cdot x_k^n
$$
as the argument, value pair for the estimate of the global minimum. 

Otherwise, recursively apply LGA to
\begin{equation}
\label{recursive_LGA}
q_{k0}+q_{k1} \cdot x + q_{k2} \cdot x^2 + \dots + q_{k n-1} \cdot x^{(n-1)} \;\;\to \;\;  \min_{x \in [x_k,\;b]}
\end{equation}

Let $x_k^{\star}$ be the solution of (\ref{recursive_LGA}). 
If 
$$
x_{k}^{\star} \ge x_k 
$$
and
$$
q_{k0}+q_{k1} \cdot (x_k^{\star}) + q_{k2} \cdot (x_k^{\star})^2 + \dots + q_{k n-1} \cdot (x_{k}^{\star})^{(n-1)} \ge 0 
$$
then {\bf STOP} and return $x_k,$
$$
p_0+p_1 \cdot x_k + p_2 \cdot x_k^2 + \dots + p_n \cdot x_k^n
$$
as an argument, value estimate of the global minimum. 

If 
$$
x_{k}^{\star} > x_k \;\;\;
$$
and
$$
q_{k0}+q_{k1} \cdot (x_k^{\star}) + q_{k2} \cdot (x_k^{\star})^2 + \dots + q_{k n-1} \cdot (x_{k}^{\star})^{(n-1)} < 0 
$$
then, by Theorem \ref{inflection_and_leaps}, increment $\mbox{\bf Number\_of\_jumps}$ by one if $x_k$ equals to $a$ otherwise by two. After that LGA performs {\it  an evolutionary leap} by setting
$$
x_{k+1} = x_k^{\star}.
$$
and proceeding with {\bf STEP 1}.
\end{itemize}

A polynomial
$$
p_0+p_1 \cdot x_k + p_2 \cdot x_k^2 + \dots + p_n \cdot x_k^n
$$
in LGA is evaluated with Horner's algorithm as follows.

Set 
$$
v= p_n.
$$
Then iterate
$$
v= v\cdot x + p_{j}\;\;\;\mbox{ for }\;\;\;j=n-1,\;n-2,\;\dots\;1,\;0
$$
and $v$ is the value of the polynomial at $x.$ 

Interested reader will find a snapshot of the working and thoroughly tested source code of LGA in Appendix of this paper.

\section{Global minimum of a univariate real analytic function}
\label{RealAnalytic}
Consider the optimization problem
\begin{equation}
\label{optimization_problem_analytic}
f(x) \;\;\to\;\;\min_{x \in [-1,\;1]}
\end{equation}
where $f(x)$ is a univariate real analytic function on $[-1,\;1].$ That means
\begin{equation}
\label{Taylor_expansion}
f(x) = \sum_{j=0}^\infty \frac{x^j}{j!}(\frac{d}{dx})^jf(0)
\end{equation}
and the series uniformly converges to $f(x)$ on $[-1,\;1].$
$$
(\frac{d}{dx})^jf(0)
$$
denotes the value of the derivative
$$
(\frac{d}{dx})^jf(x)
$$
at $x=0.$

This section proposes a numerical procedure for solving (\ref{optimization_problem_analytic}). The procedure is based on LGA. Namely, (\ref{optimization_problem_analytic})  is replaced with
\begin{equation}
\label{optimization_problem_LGA}
P_f^n(x) \;\;\to\;\;\min_{x \in [-1,\;1]}
\end{equation} 
where
$$
P_f^n(x) = \sum_{j=0}^n \frac{x^j}{j!} (D_h)^jf(0)
$$
and 
\begin{eqnarray*}
D_h^0f(x) &=& f(x),\\
D_hf(x) &=& \frac{f(x+h) - f(x-h)}{2\cdot h},\\
D_h^jf(x) &=& D_h(D_h^{j-1}f(x)) \mbox{ for } \; j=0,\;1,\;\dots
\end{eqnarray*}
The numerical algorithm is based on the following statement.

\begin{theorem}
\label{real_analytic_LGA}
For any $x \in [-1,\;1]$
\begin{equation*}
f(x) - P_f^n(x) = 
 \int_0^1 (\frac{d}{dx})^{n+1} f(t\cdot x)\cdot x^{n+1}\frac{(1-t)^{n}} {n!} dt - \sum_{j=1}^n x^j\cdot \int_0^1 \frac{(1-t)^{j+1}}{j!\cdot (j+1)!}\cdot (\frac{d}{dt})^{j+2}(t^j\cdot D_{t\cdot h}^jf(0))dt. 
\end{equation*}
\end{theorem}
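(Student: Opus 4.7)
The plan is to split the error into a classical Taylor remainder plus a finite-difference discretization error. Introducing the exact Taylor polynomial $T_f^n(x) = \sum_{j=0}^n \frac{x^j}{j!}(\frac{d}{dx})^j f(0)$, write
$$
f(x) - P_f^n(x) = [f(x) - T_f^n(x)] + [T_f^n(x) - P_f^n(x)].
$$
Taylor's theorem with integral remainder applied to $f$ handles the first bracket and produces exactly the first integral in the statement. Since $D_h^0 f \equiv f$, the second bracket simplifies to $\sum_{j=1}^n \frac{x^j}{j!}\bigl[f^{(j)}(0) - D_h^j f(0)\bigr]$, so it suffices to express each difference $f^{(j)}(0) - D_h^j f(0)$ as an integral of the advertised shape.

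The key device is the auxiliary function $g_j(t) = t^j D_{th}^j f(0)$ on $[0,1]$, noting that $g_j(1) = D_h^j f(0)$. I would apply Taylor's formula with integral remainder to $g_j$ at order $j+2$:
$$
D_h^j f(0) = g_j(1) = \sum_{k=0}^{j+1} \frac{g_j^{(k)}(0)}{k!} + \int_0^1 \frac{(1-t)^{j+1}}{(j+1)!} \Big(\frac{d}{dt}\Big)^{j+2} g_j(t)\, dt.
$$
If the partial sum collapses to $f^{(j)}(0)$, then rearranging and multiplying by $x^j/j!$ produces precisely the $j$-th summand of the second term in the statement, and summing over $j=1,\ldots,n$ closes the argument.

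The remaining task is to identify the low-order Taylor coefficients of $g_j$ at $t=0$. Since $D_h$ corresponds symbolically to $\sinh(hD)/h$ acting on real analytic functions (with $D=d/dx$), $D_{th}^j f(0)$ expands in even powers of $t$:
$$
D_{th}^j f(0) = f^{(j)}(0) + \sum_{\ell=1}^\infty A_\ell\, h^{2\ell}\, t^{2\ell}.
$$
Multiplying by $t^j$ yields $g_j(t) = f^{(j)}(0)\,t^j + O(t^{j+2})$, whence $g_j^{(k)}(0) = 0$ for $0 \le k < j$ and for $k = j+1$, while $g_j^{(j)}(0) = j!\,f^{(j)}(0)$. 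The finite sum in Taylor's formula then indeed collapses to $f^{(j)}(0)$, yielding $f^{(j)}(0) - D_h^j f(0) = -\int_0^1 \frac{(1-t)^{j+1}}{(j+1)!} (\frac{d}{dt})^{j+2}(t^j D_{th}^j f(0))\, dt$.

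The main technical obstacle is justifying the parity claim that $D_{th}^j f(0)$ contains only even powers of $t$. I would either prove it by induction on $j$, using that $D_h f(x) = (f(x+h)-f(x-h))/(2h)$ is an even function of $h$ for each fixed $x$ and that central differencing preserves this evenness when iterated on analytic data, or verify it directly from the identity $\sinh(hD)/h = D + \frac{h^2}{6}D^3 + \frac{h^4}{120}D^5 + \cdots$ by raising it to the $j$-th power. Once this parity is in place, combining the two integral remainders yields the formula of the theorem; the case $x=0$ is trivial since both sides vanish.
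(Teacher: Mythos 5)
Your proposal is correct, and it shares the paper's overall skeleton: the same splitting of $f(x)-P_f^n(x)$ into the classical integral Taylor remainder plus the discretization error $\sum_{j=1}^n \frac{x^j}{j!}\bigl(f^{(j)}(0)-D_h^jf(0)\bigr)$, and the same target identity expressing each difference as $-\int_0^1 \frac{(1-t)^{j+1}}{(j+1)!}(\frac{d}{dt})^{j+2}\bigl(t^j D_{th}^jf(0)\bigr)\,dt$. Where you genuinely diverge is in how that identity is derived. The paper applies $(D_h)^j$ in the $x$-variable to the order-$(j+1)$ Taylor expansion of $f$, proves by induction the monomial facts $(D_h)^jx^k=0$ for $k<j$, $(D_h)^jx^j=j!$, and $(D_h)^jx^{j+1}|_{x=0}=0$, and then needs the commutation identity $(D_h)^j\bigl((\tfrac{d}{dt})^{j+2}f(tx)\bigr)=(\tfrac{d}{dt})^{j+2}\bigl(t^j(D_{th})^jf(tx)\bigr)$ to push the difference operator inside the remainder integral before setting $x=0$. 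You instead take the auxiliary function $g_j(t)=t^jD_{th}^jf(0)$ as the primary object and apply Taylor's theorem in $t$ at order $j+2$, reducing everything to the claim that $D_{th}^jf(0)$ is an even power series in $t$ with constant term $f^{(j)}(0)$; for analytic $f$ this is immediate either from the symbolic identity you cite or from the explicit formula $(D_{th})^jf(0)=(2th)^{-j}\sum_{s=0}^{j}\binom{j}{s}(-1)^sf((j-2s)th)$, which is manifestly invariant under $t\mapsto -t$. Your route is shorter and avoids the paper's operator-commutation step; note that the three monomial identities of the paper are exactly your statements $g_j^{(k)}(0)=0$ for $k<j$ and $k=j+1$ and $g_j^{(j)}(0)=j!\,f^{(j)}(0)$ in disguise, so the two arguments encode the same information in different variables. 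The only item you must not leave as a sketch is the parity claim, since the collapse of the Taylor polynomial of $g_j$ rests entirely on it; either of the two justifications you name closes that gap in the real-analytic setting of the theorem.
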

\begin{proof}
It follows from Taylor expansion with the integral remainder term that
$$
f(x) - P_f^n(x) = \sum_{j=0}^n \frac{x^j}{j!} ((\frac{d}{dx})^jf(0) - (D_h)^jf(0)) + \int_0^1 (\frac{d}{dx})^{n+1} f(t\cdot x)\cdot x^{n+1}\frac{(1-t)^{n}} {n!} dt 
$$
In order to calculate 
\begin{equation}
\label{difference}
(\frac{d}{dx})^jf(0) - (D_h)^jf(0)
\end{equation}
let us justify the following statements with the help of mathematical induction.
\begin{eqnarray}
\label{first}
(D_h)^jx^k &=& 0\;\;\mbox{ for }\;\;0\le k < j\\
\label{second}
(D_h)^jx^j &=& j!\\
\label{third}
(D_h)^jx^{j+1} &=& 0\;\;\mbox{ for }\;\;x=0.
\end{eqnarray}
The basis of mathematical induction follows from
\begin{eqnarray*}
(D_h)^jx^k &=& 0\;\;\mbox{ for }\;\; k = 0 \;\;\mbox{ and } \;\;j>0\\
(D_h)x &=& \frac{(x+h)-(x-h)}{2\cdot h}=1\\
(D_h)x^{2} &=& \frac{(x+h)^2 - (x-h)^2}{2\cdot h} = 0\;\;\mbox{ for }\;\;x=0.
\end{eqnarray*}
The step of mathematical induction for each of the statements (\ref{first}), (\ref{second}), (\ref{third}) is as follows.

      Suppose that (\ref{first}) is true for $j\le m$ and $k<j.$ To prove that it remains true for $j=m+1$ and $k<m+1$ consider
$$
(D_h)^{m+1}x^k = (D_h)^m(\frac{(x+h)^k-(x-h)^k}{2\cdot h})= (D_h)^m(\frac{1}{2h}\sum_{s=1}^k C_s^k(1 - (-1)^s)x^{k-s}\cdot h^s)
$$
and
$$
(D_h)^m(\frac{1}{2h}\sum_{s=1}^k C_s^k(1 - (-1)^s)x^{k-s}\cdot h^s) = \sum_{s=1}^k C_s^k(1 - (-1)^s)(D_h)^m(x^{k-s}) \cdot h^s = 0
$$
due to the assumption of the mathematical induction $(D_h)^m(x^{k-s})=0$ for $s=1,\dots,k.$ The statement (\ref{first}) follows.

       Assume that (\ref{second}) is true for $j=m.$ Consider 
$$
(D_h)^{m+1} x^{m+1} = (D_h)^m(\frac{(x+h)^{m+1}-(x-h)^{m+1}}{2\cdot h})
$$
and (\ref{first}) together with the assumption of the mathematical induction yield
$$
(D_h)^{m+1} x^{m+1}=\sum_{s=1}^{m+1}C_s^{m+1}(1 - (-1)^s)(D_h)^m(x^{m+1-s}) \cdot \frac{h^{s-1}}{2} = (m+1)!.
$$ 
The statement (\ref{second}) is established.

        Assume that (\ref{third}) is valid for $j=m.$ Then
$$
(D_h)^{m+1} x^{m+2} = (D_h)^m(\frac{(x+h)^{m+2}-(x-h)^{m+2}}{2\cdot h})
$$
By the assumption of the mathematical induction and taking into account (\ref{first}), (\ref{second}) we have 
$$
(D_h)^{m+1} x^{m+2}=\sum_{s=1}^{m+2}C_s^{m+2}(1 - (-1)^s)(D_h)^m(x^{m+2-s}) \cdot \frac{h^{s-1}}{2} = 0\;\;\mbox{ for }\;\;x=0. 
$$         
Statement (\ref{third}) is established.

      Applying $(D_h)^j$ to the Taylor expansion
$$
f(x)-f(0)=\sum_{s=1}^{j+1} \frac{1}{s!}(\frac{d}{dx})^sf(0)\cdot x^s + \int_0^1 (\frac{d}{dt})^{j+2} f(t\cdot x)\frac{(1-t)^{j+1}} {(j+1)!} dt
$$
and taking into account (\ref{first}), (\ref{second})  yields
\begin{eqnarray}
\label{fourth}
(D_h)^jf(0) &=& (\frac{d}{dx})^jf(0) +\frac{1}{(j+1)!}(\frac{d}{dx})^{j+1}f(0)(D_h)^{j} x^{j+1}+ \nonumber\\
&&\\
&& \int_0^1 (D_h)^j((\frac{d}{dt})^{j+2} f(t\cdot x))\frac{(1-t)^{j+1}} {(j+1)!} dt \nonumber
\end{eqnarray}
On the other hand,
$$
D_h((\frac{d}{dt})^{j+2} f(t\cdot x)) = \frac{(\frac{d}{dt})^{j+2} f(t\cdot (x+h)) - (\frac{d}{dt})^{j+2} f(t\cdot (x-h))}{2 \cdot h} =(\frac{d}{dt})^{j+2}(t\cdot D_{th}f(tx))
$$
and so is
$$
(D_h)^j((\frac{d}{dt})^{j+2} f(t\cdot x)) = (\frac{d}{dt})^{j+2} (t^j (D_{th})^jf(tx))
$$
Therefore setting $x=0$ in (\ref{fourth}) and making use of (\ref{third}) we obtain
$$
(D_h)^jf(0) = (\frac{d}{dx})^jf(0) + \int_0^1 (\frac{d}{dt})^{j+2} (t^j\cdot (D_{th})^jf(0))\frac{(1-t)^{j+1}}{(j+1)!} dt
$$
That completes the calculation of (\ref{difference}) and the proof.

\end{proof}  

Given the required margin of error $\varepsilon >0$ Theorem \ref{real_analytic_LGA} provides an effective recipe for finding the global minimum of a real analytic function on the interval $[-1,\;1].$ 

\paragraph{\bf LGA for real analytic functions}

\begin{itemize}
\item[{\bf Step 1.}] Find a natural number $n$ so that  
$$
\mid \int_0^1 (\frac{d}{dx})^{n+1} f(t\cdot x)\cdot x^{n+1}\frac{(1-t)^{n}} {n!} dt \mid \le \frac{\varepsilon}{2}
$$
\item[{\bf Step 2}] Calculate a step size $h>0$ such that
$$
\mid \sum_{j=1}^n x^j\cdot \int_0^1 \frac{(1-t)^{j+1}}{j!\cdot (j+1)!}\cdot (\frac{d}{dt})^{j+2}(t^j\cdot D_{t\cdot h}^jf(0))dt \mid \le \frac{\varepsilon}{2}
$$
\item[{\bf Step 3}] Use LGA to solve optimization problem (\ref{optimization_problem_LGA}) with $h>0$ from {\bf Step 2}.
\end{itemize}

Let $x^{\star}$ be the $x$-argument of the global minimum calculated at {\bf Step 3}. Let $x_{orig}$ be the $x$-argument of the global minimum calculated for the original function $f(x)$ from (\ref{optimization_problem_analytic}). Then 
$$
\mid f(x_{orig}) - P_f^n(x^{\star})\mid \le \varepsilon
$$
and
$$
P_f^n(x_{orig}) \le P_f^n(x^{\star}) + 2\cdot \varepsilon .
$$


\section{Numerical experiments with univariate polynomials}
\label{numerical_experiments}

This section presents the results of numerical experiments conducted in order to compare the performance of LGA with Brute-Force Search (BFS), Zero Derivative Method (ZDM), modifications of Piyavskii-Shubert Method (PSM) discussed in \cite{Ellaia},  \cite{Sergeyev}, \cite{YSergeyev}. All numerical experiments follow the same scenario: 

\begin{itemize}
\item[$\bullet$] Repeat 500 times {\bf Step 1} and {\bf Step 2}. 
      \begin{itemize}
        \item[{\bf Step 1.}] Randomly generate a real polynomial $p(x)$ of degree $n$ with  roots uniformly distributed on $[-1,\;b]\times [-1,\;1],$ where $b$ is a real number between $-1$ and $1$ which remains fixed across all 500 trials.
       \item[{\bf Step 2.}] Use LGA and its competitor to solve the optimization problem
$$
p(x)\;\;\to\;\; \min_{[-1,\;1]}
$$ 
with precision $0.0001$ for $x$-argument of the global minimum on $[-1,\;1].$
        Record the processing time for  LGA and its competitor.
      \end{itemize}
\item[$\bullet$] After repeating 500 times ${\bf Step 1} \mbox{ and } {\bf Step 2}$ calculate $T_{\ell}$ and $T_c,$ the average processing time for LGA and its competitor respectively.
\item[$\bullet$] Update the file with the experimental records by adding a new line $(n,\;T_{\ell},\;T_c),$ where $n$ is the degree of the polynomial.
\end{itemize}

The final result is presented in the form of two curves (average time spent versus polynomial degree), one for LGA and the other for its competitor. 

Total time spent $T_c$ includes all necessary supplementary steps that are needed in order to successfully implement the tested algorithm. For example, ZDM total time covers calculation of critical points with Laguerre's method and the subsequent search for the minimum among critical values. PSM time includes calculation of the Lipschitz constant or its counterparts.

\subsection{BFS} 
  BFS attacks 
$$
f(x) \;\to\;\min_{[a,\;b]}
$$ 
by transforming it into 
$$
f(a + (b-a)\cdot\frac{j}{N})\;\to\;\;\min_{0\le j \le N}
$$
and then taking the smallest value in $\{f(a + (b-a)\cdot\frac{j}{N})\}_{j=0}^N$ and its respective $x$-argument as an estimate for the solution of the optimization problem.

LGA outperforms BFS for polynomials with roots uniformly distributed on $[-1,\;b]\times [-1,\;1]$ where $-1< \;b < 1.$ LGA considerably speeds up as the value of the parameter $b$ decreases (Fig. \ref{fig:2}, Fig. \ref{fig:3}). 

\input epsf
\setlength{\unitlength}{1cm}
\begin{figure}
\epsfxsize=11cm
\epsfysize=6cm
\epsfbox{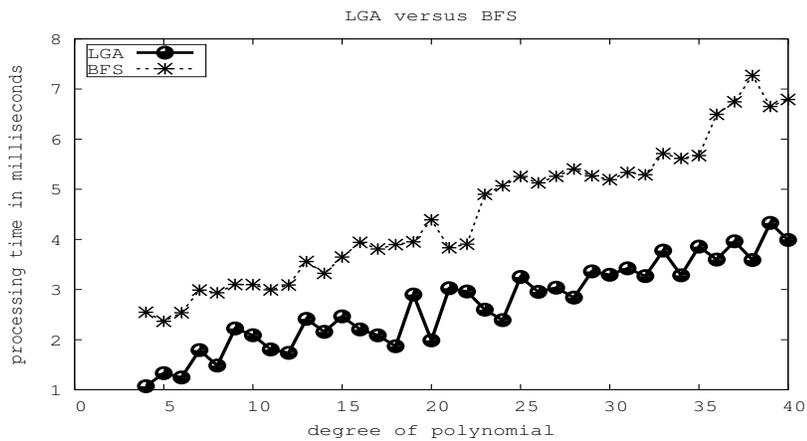}
\caption{LGA versus Brute-Force Search, where $b=0.$}
\label{fig:2}       
\end{figure}

\input epsf
\setlength{\unitlength}{1cm}
\begin{figure}
\epsfxsize=11cm
\epsfysize=6cm
\epsfbox{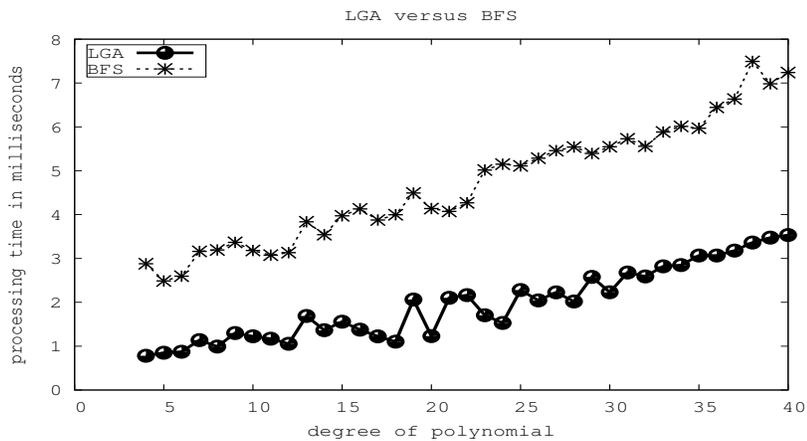}
\caption{LGA versus Brute-Force Search, where $b=-0.5.$}
\label{fig:3}       
\end{figure}

If $b=1$ then LGA works exactly so well as BFS (Fig. \ref{fig:4}, Fig. \ref{fig:5}). A generic application corresponds to the situation with $b<1.$ Therefore employing LGA instead of BFS  will improve the performance of your application.

\input epsf
\setlength{\unitlength}{1cm}
\begin{figure}
\epsfxsize=11cm
\epsfysize=6cm
\epsfbox{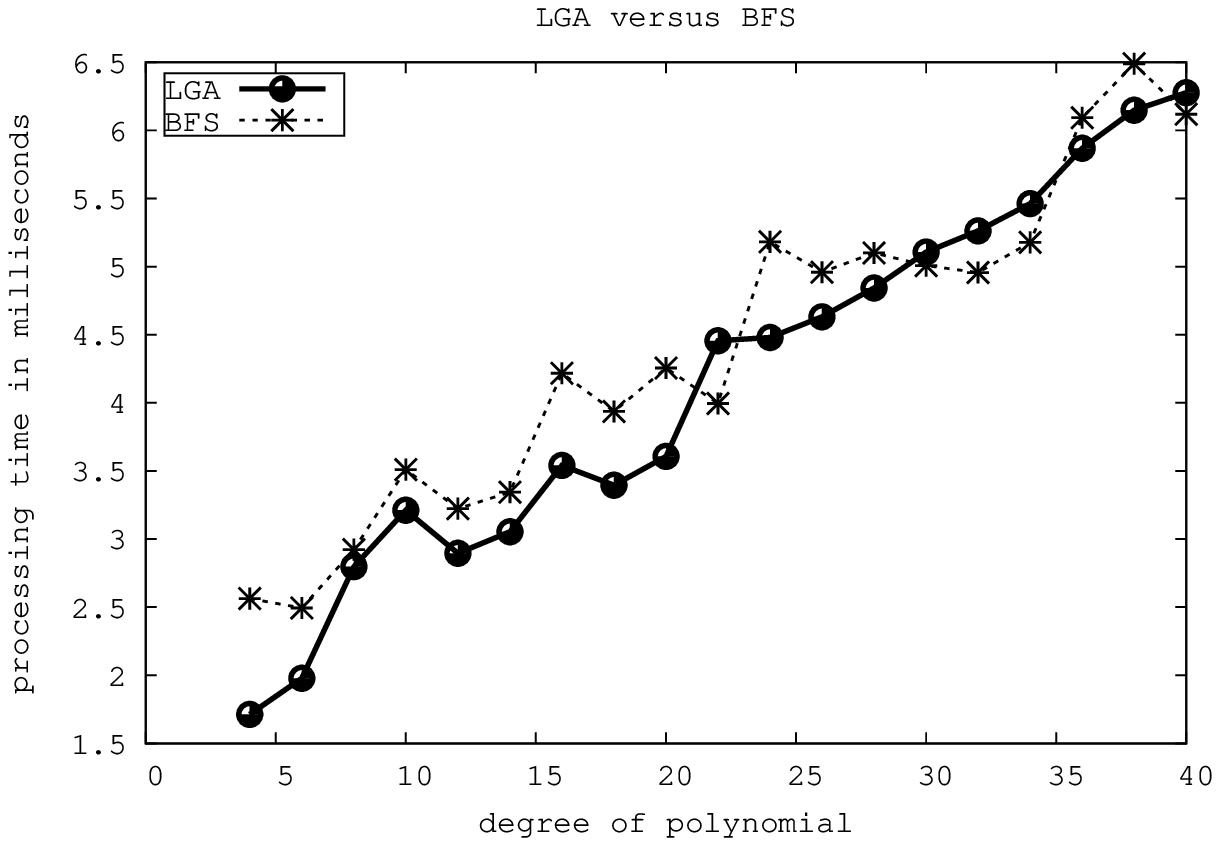}
\caption{LGA versus Brute-Force Search for polynomials of even degrees and where $b=1.$}
\label{fig:4}       
\end{figure}

\input epsf
\setlength{\unitlength}{1cm}
\begin{figure}
\epsfxsize=11cm
\epsfysize=6cm
\epsfbox{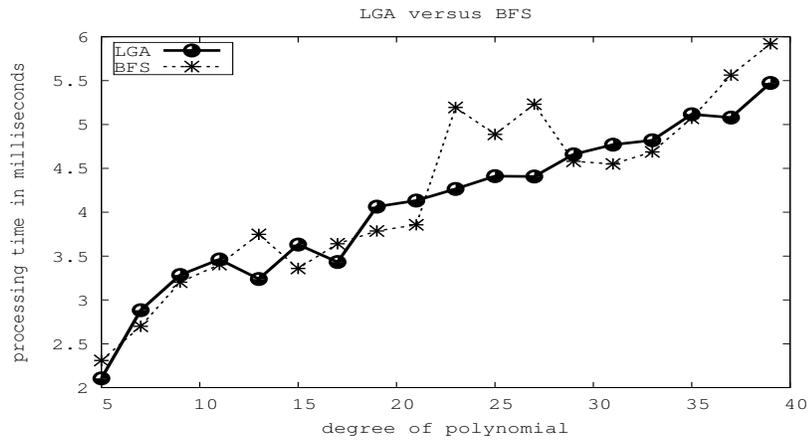}
\caption{LGA versus Brute-Force Search for polynomials with odd degrees and where $b=1.$}
\label{fig:5}       
\end{figure}

\subsection{ZDM}
ZDM finds the global minimum
$$
f(x) \;\to\;\min_{[a,\;b]}
$$
by calculating zeroes $\{ x_j \}_{j=1}^M $ of the derivative
$$
\frac{d}{dx}f(x) = 0\;\;\mbox{for}\;\;x \; \in\;  [a,\;b]
$$
and then finding the smallest value in $\{ f(x_j) \}_{j=1}^M .$ If it is $f(x_j)$ then ZDM returns $(x_j,\;f(x_j))$  as an estimate for the solution of the optimization problem.
In all numerical experiments reported in this paper the polynomial roots for ZDM were calculated with Laguerre's Method \cite{Adams}, \cite{Armijo}. The ZDM processing time  includes the invocation of Laguerre's Method. LGA notably faster than ZDM (Fig. \ref{fig:6}). 

\input epsf
\setlength{\unitlength}{1cm}
\begin{figure}
\epsfxsize=11cm
\epsfysize=6cm
\epsfbox{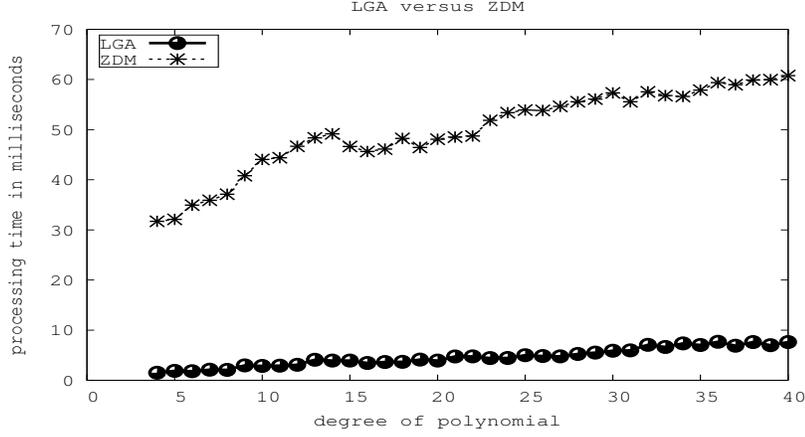}
\caption{LGA versus ZDM and $b=1.$}
\label{fig:6}       
\end{figure}

\subsection{PSM and its accelerations}
Piyavskii's type algorithms tackle  the optimization problem
$$
f(x) \;\;\to \;\; \min_{[a,\;b]}
$$
by constructing at each iteration either a piecewise linear $(f(x)$ is Lipschitz \cite{Piyavskii_67}, \cite{Piyavskii_72}, \cite{Sergeyev}, \cite{Shubert}) or a piecewise quadratic  $( \frac{d}{dx}f(x)$ is Lipschitz \cite{Ellaia}, \cite{Kvasov}, \cite{Sergeyev}, \cite{YSergeyev} ) auxiliary function $\Phi_n(x)$ such that
$$
\Phi_n(x) \le f(x) \;\;\forall \; x \in [a,\;b]
$$  
Then the original optimization problem is replaced with
$$
\Phi_n(x) \;\;\to \;\; \min_{[a,\;b]}
$$
Based on its solution the algorithm either terminates or proceeds to the next step with the new refined auxiliary function $\Phi_{n+1}(x).$

\subsubsection{Modifications of PSM with local tuning of piecewise linear auxiliary functions}

LGA is compared against PSM with tuning of the local Lipschitz constants  ( referred as LT) and its enhancement LT\_LI presented in \cite{Sergeyev}. In view of the numerical simulations from \cite{Sergeyev} LT and LT\_LI appear to be the fastest among Pyavskii's type algorithms (discussed in \cite{Sergeyev}) with piecewise linear auxiliary functions.
LGA is faster than LT LI (Fig. \ref{fig:7}). 

\input epsf
\setlength{\unitlength}{1cm}
\begin{figure}
\epsfxsize=11cm
\epsfysize=6cm
\epsfbox{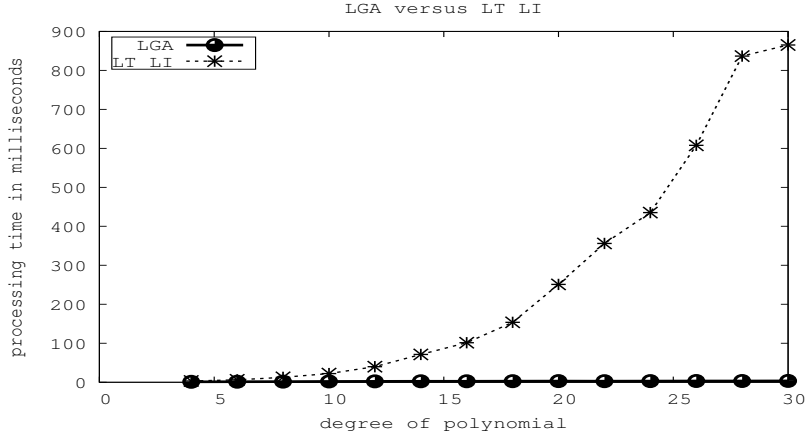}
\caption{LGA versus LT LI for polynomials with even degrees and $b=1.$}
\label{fig:7}       
\end{figure}

\subsubsection{Modifications of PSM with local tuning of piecewise quadratic auxiliary functions} 

Modifications of PSM with smooth piecewise quadratic auxiliary functions are discussed in \cite{Sergeyev}. Fig. \ref{fig:8} presents the  comparison results between LGA and PSM enriched by local tuning of the Lipschitz constant for $\frac{d}{dx}f(x)$ (referred as DLT). LGA outperforms DLT.

\input epsf
\setlength{\unitlength}{1cm}
\begin{figure}
\epsfxsize=11cm
\epsfysize=6cm
\epsfbox{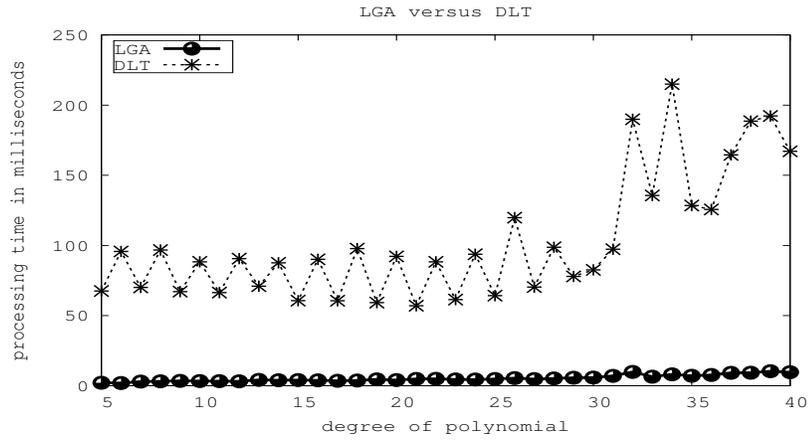}
\caption{LGA versus DLT, $b=1.$}
\label{fig:8}       
\end{figure}

  DLT with some local improvement technique \cite{Sergeyev} is addressed as DLT\_LI. Its comparison with LGA is presented by Fig.\ref{fig:9}. LGA is faster than DLT\_LI.
  
  \input epsf
\setlength{\unitlength}{1cm}
\begin{figure}
\epsfxsize=11cm
\epsfysize=6cm
\epsfbox{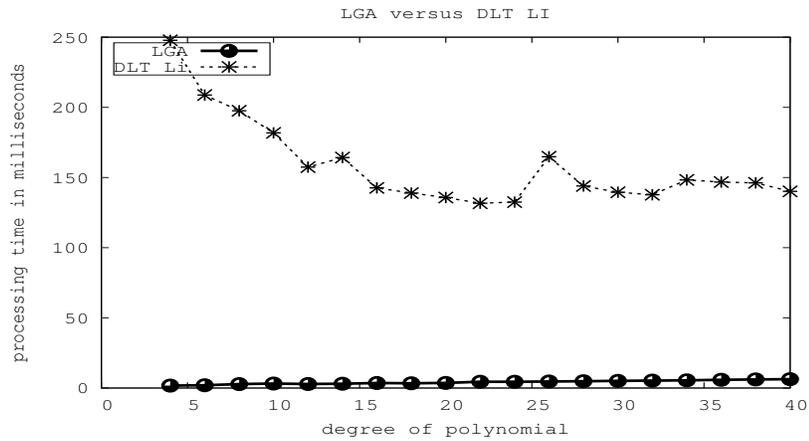}
\caption{LGA versus DLT LI for polynomials with even degrees, $b=1.$}
\label{fig:9}       
\end{figure}

\subsubsection{Modifications of PSM with piecewise quadratic auxiliary functions} 
The paper \cite{Ellaia} introduces the modification of PSM based on piecewise quadratic auxiliary functions that are not necessary smooth. The algorithm from \cite{Ellaia} is referred in Fig.\ref{fig:10} as EEK. LGA is faster than EEK. 

   \input epsf
\setlength{\unitlength}{1cm}
\begin{figure}
\epsfxsize=11cm
\epsfysize=6cm
\epsfbox{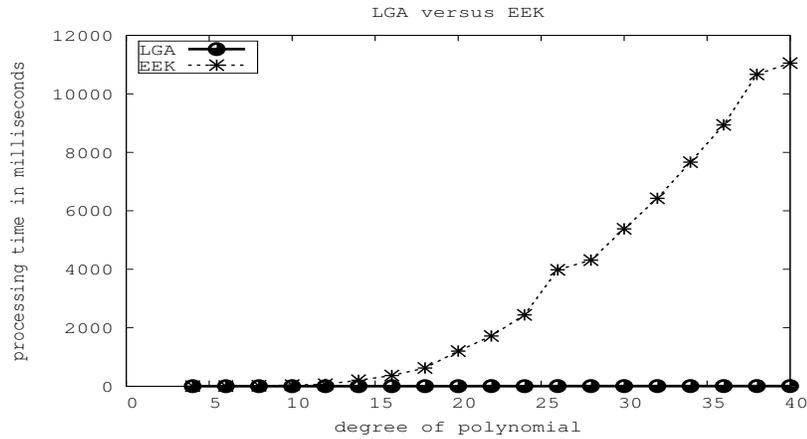}
\caption{LGA versus EEK for polynomials with even degrees, $b=1.$}
\label{fig:10}       
\end{figure}


\section{Acknowledgment}
\label{Acknowledgment}

The author is grateful to anonymous referees for comments and suggestions that helped to focus and improve the original manuscript. In particular, the comparison between LGA and Piyavskii-Shubert method was added upon a referee remark. The author is thankful to Blaiklen Marx for the help with preparing the manuscript for publication and testing the java implementations of algorithms in i-oblako.org framework.

\section{Appendix}
\label{appendix}
The program is looking for the global minimum of a polynomial on the interval $[a,\;b].$ The polynomial is represented as an array 
\begin{lstlisting}
double polynom = new double[degree+1];
\end{lstlisting}
A snapshot of the source code fragment essential for LGA is as follows.
\begin{lstlisting}
public double getMin(double[] polynom,
                     double a, double b,
                            double step){
double rt=a;
double[] pl = null;
double rt_prev=a;

if(a >= b)
   return b;

if(b-a <= step)
   return a;

while(polynom[polynom.length - 1] == 0 && polynom.length > 1){
     pl= new double[polynom.length - 1];
     for(int i =0; i < polynom.length - 1;i++)
           pl[i]=polynom[i];
    polynom=pl;
}
if(polynom.length == 1)
  return a;
if(polynom.length == 2){
  if(polynom[1]>= 0 )
        return a;
  else
        return b;
}

rt=a;
rt_prev=rt;
int Njumps = 0;
    do{
          rt_prev = GradientDescent(polynom,rt,b,step);
            if(rt_prev >= b)
                  return b;

           if(Njumps >= (polynom.length - 3))
                  return rt_prev;

          pl = Horner(polynom,rt_prev);
          rt = getMin(pl,rt_prev,b,step);
        if(rt - rt_prev<= step)
              return rt_prev;
        else{
        if(rt_prev == a)
          Njumps++;
        else
          Njumps=Njumps + 2;
         }

          if(HornerEval(pl,rt)>=0)
                 return rt_prev;

        rt_prev=rt;
      }
  while(rt < b-step);
}
\end{lstlisting} 

and gradient descent is implemented as follows.

\begin{lstlisting}
public double EvalDerivative(double[] polynom,double x){
          double ret = 0;

               for(int i = polynom.length -1; i>0; i--)
                     ret=ret*x + i*polynom[i];
               return ret;
}

public double GradientDescent(double[] polynom,
                              double a, double b,
                                     double step){
  
        double  rt=a;
       
          while(EvalDerivative(polynom,rt)<0 && rt < b )
                             rt=rt+step;
             return rt;
}

\end{lstlisting}

Java implementations of the algorithms discussed in the paper are available upon request.

%




\end{document}